\journal{}
\newtheorem{theorem}{Theorem}[section]
\newtheorem{definition}[theorem]{Definition}
\newtheorem{remark}[theorem]{Remark}
\numberwithin{equation}{section}
\begin{document}

\begin{frontmatter}


\title{On the best Ulam constant of a higher order linear difference equation}


\author{Alina Ramona Baias and Dorian Popa}

\address{Technical University of Cluj-Napoca,
Department of Mathematics \\
G. Bari\c tiu No.25, 400027, Cluj-Napoca, Romania e-mail:
\textrm{Baias.Alina@math.utcluj.ro.}}

\address{Technical University of Cluj-Napoca,
Department of Mathematics \\
G. Bari\c tiu No.25, 400027, Cluj-Napoca, Romania e-mail:
\textrm{Popa.Dorian@math.utcluj.ro}}

\begin{abstract}
In a Banach space $X$ the linear difference equation with constant coefficients $x_{n+p}=a_1x_{n+p-1}+\ldots+a_px_n,$ is Ulam stable if and only if the roots $r_k,$ $1\leq k\leq p,$ of its characteristic equation do not belong to the unit circle. If $|r_k|>1,$ $1\leq k\leq p,$ we prove that the best Ulam constant of this equation is $\frac{1}{|V|}\sum\limits_{s=1}^{\infty}\left|\frac{V_1}{r_1^s}-\frac{V_2}{r_2^s}+\ldots+\frac{(-1)^{p+1}V_p}{r_p^s}\right|,$ where $V=V(r_1,r_2,\ldots,r_p)$ and $V_k=V(r_1,\ldots,r_{k-1},r_{k+1}, \ldots, r_p),$ $1\leq k\leq p,$ are Vadermonde determinants.
\end{abstract}

\begin{keyword}
Linear difference equation \sep  Ulam stability \sep  Best constant

\MSC[2008] 39A30 \sep 39B62

\end{keyword}

\end{frontmatter}



\section{Introduction}

The origin of the stability theory of functional equations is traced to the fall of 1940, when S.M. Ulam considered the problem of approximate homomorphisms of groups. The first partial answer to Ulam$'$s problem came within a year and it was given by D.H. Hyers, who proved that
Cauchy's equation in Banach spaces is stable \cite{hyers}.

Generally, we say that an equation is stable in Ulam sense if for every approximate solution of it
there exists an exact solution of the equation near it. For more details and
results on Ulam stability we refer the reader to \cite{agarwal, brzdek-popa, hyers1}.

In the last years, results on Ulam stability have been obtained in various directions, we mention here on one hand the stability results for functional, difference or differential equations \cite{brzdeck_, baias-popa, baias-popa1, baias-popa2, popa1} and on the other hand the Ulam stability results for linear operators \cite{brzdek-popa, popa12}.

The problem of the best Ulam constant was first posed in \cite{Rassias}. In the
literature there are only a few results on the best Ulam constant of
equations and operators. We can merely mention here the characterization of Ulam stability of
linear operators and the representation of their best Ulam constants obtained in \cite{popa3, popa4, popa12}. In the same direction J. Brzdek, S.M. Jung and M. Th Rassias gave sharp estimates for the Ulam constant of some second order linear difference equations \cite{brzdek-jung, brzdek12, jung, rassias1}. S.J Dilworth et all. in \cite{dilworth} obtained the best Ulam constant of approximately convex functions. Later, M. Onitsuka \cite{onitsuka1, onitsuka} and D.R. Anderson and M. Onitsuka \cite{anderson} obtained results on Hyers-Ulam stability and on the best Ulam constant for a first order and a second order linear difference equations with constant stepsize. C. Bu\c se et all. \cite{buse2} proved that a discrete system $X_{n+1}=AX_n,$ $n\in\mathbb{N},$ where $A$ is a matrix with complex entries, is Ulam stable if and only if $A$ possesses a discrete dichotomy. Recently, A.R. Baias and D. Popa obtained the best Ulam constant for a second and a third order linear difference equation in Banach spaces \cite{baias-popa, baias-popa1}, as well as for the second order linear differential operator \cite{baias-popa2}.

The discrete dynamical systems are governed by difference equations. The notions of stability and asymptotic stability for such systems concern the behaviour of the solutions of the associated difference equation with respect to an equilibrium point (see \cite{elaydi}). Ulam stability is connected to the notion of perturbation and shadowing of a discrete dynamical system, so it can be a measure of the reaction of the system under perturbation (see \cite{palmer}). Finding the best Ulam constant in this case it means to find the closest exact solution of the dynamical system to a solution of the perturbed system.

The goal of this paper is to determine the best Ulam constant for a $p$ order linear difference
equation with constant coefficients in Banach spaces, for distinct roots of the characteristic equation belonging to the exterior of the unit disc. In this way we improve and complement some existent results in the field.

\section{Main results}

Let $\mathbb{K}$ be either the field of real numbers $\mathbb{R}$ or the field of complex numbers $\mathbb{C}$ and $(X,\|\cdot\|)$ a Banach space over the field $\mathbb{K}.$ By $\mathbb{N}=\{0,1,2,\ldots\}$  we denote the set of all nonnegative integers.

Consider the linear difference equation of order $p$
\begin{equation}\label{eq1}
x_{n+p}=a_1x_{n+p-1}+\ldots+a_px_n,\ n\in\mathbb{N},
\end{equation}
where $a_1,a_2,\ldots, a_p\in\mathbb{K},$ $x_0,\ldots,x_{p-1}\in X$ and $p\geq 1$ is a positive integer.

We denote by $r_1,r_2,\ldots,r_p$ the complex roots of the characteristic equation associated to $(\ref{eq1})$, i.e.,
\begin{equation}\label{eqchar}
r^p=a_1r^{p-1}+\ldots+a_p.
\end{equation}
If $r_1,r_2,\ldots,r_p $ are distinct numbers, then the general solution of the equation $(\ref{eq1})$ is given by

 \begin{equation}\label{eq2-solgen}
 x_n^{(H)}=\mathcal{C}_1r_1^n+\ldots+\mathcal{C}_p r_p^n,\quad n\in\mathbb{N},
 \end{equation}
 where $\mathcal{C}_1,\ldots,\mathcal{C}_p\in X$ are arbitrary constants (see \cite{cull, elaydi, gil}). For more details and results on linear difference equations in Banach spaces see \cite[Chapter 6]{gil}.

\begin{definition}\label{def-sability}
The equation $(\ref{eq1})$ is called \emph{Ulam stable} if there exists a constant $K\geq0$ such that for every $\varepsilon>0$ and every sequence $(x_n)_{n\geq 0}$ in $X$ satisfying
\begin{equation}\label{eq2}
\|x_{n+p}-a_1x_{n+p-1}-\ldots-a_px_n\|\leq\varepsilon,\  n\in\mathbb{N},
\end{equation}
there exists a sequence $(y_n)_{n\geq 0}$ in $X$ such that
\begin{equation}\label{eq3}
y_{n+p}=a_1y_{n+p-1}+\ldots+a_py_n,\ n\in\mathbb{N},
\end{equation}
\begin{equation}\label{eq4}
\|x_n-y_n\|\leq K\varepsilon,\ n\in\mathbb{N}.
\end{equation}
\end{definition}

A sequence $(x_n)_{n\geq 0}$ satisfying $(\ref{eq2})$ for some positive $\varepsilon$ is called an \emph{approximate solution} of the equation $(\ref{eq1nonh}).$ So, Definition \ref{def-sability} can be reformulated as follows: the equation $(\ref{eq1})$ is called Ulam stable if for every approximate solution of $(\ref{eq1})$ there exists an exact solution near it.
The number $K$ from Definition \ref{def-sability} is called an \emph{Ulam constant} of the equation $(\ref{eq1}).$
In what follows we will denote by $K_R$ the infimum of all Ulam constants of the equation $(\ref{eq1}).$ If $K_R$ is an Ulam constant for $(\ref{eq1})$ then we call it \emph{the best Ulam constant} or the \emph{Ulam constant} of the equation.
Generally, the infimum of all Ulam constants of an equation is not necessary an Ulam constant of that equation (see \cite{hatori, popa1}).

Some Ulam stability results for the equation $(\ref{eq1})$ can be found in \cite{brzdek-popa}. Here we recall a result obtained in \cite{brzdek-popa1} and \cite{popa}.
\begin{theorem}
\begin{enumerate}
\item[$i)$] If $|r_k|\neq 1$ for every $k=1,2,\ldots,p,$ then for every $\varepsilon>0$ and every sequence $(x_n)_{n\geq0}$ in $X$ satisfying
\begin{equation}
\|x_{n+p}-a_1x_{n+p-1}-\ldots-a_px_n\|\leq\varepsilon,\ n\in\mathbb{N},
\end{equation}
there exists a sequence $(y_n)_{n\geq0}$ in $X$ such that
\begin{eqnarray*}
y_{n+p}=a_1y_{n+p-1}+\ldots+a_py_n,\ n\in\mathbb{N},\\
\|x_n-y_n\|\leq\frac{\varepsilon}{|\prod\limits_{k=1}^{p}(|r_k|-1)|},\ n\in\mathbb{N}.
\end{eqnarray*}
Moreover, if $|r_k|>1$ for all $k=1,2,\ldots,p,$ then the sequence $(y_n)_{n\geq0}$ is unique.
\item[$ii)$] If there exists $j\in\{1,\ldots,p\}$ such that $|r_j|=1,$ then the equation $(\ref{eq1})$ is not Ulam stable.
\end{enumerate}
\end{theorem}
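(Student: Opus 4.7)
My plan is to factor the $p$-th order difference operator into commuting first-order factors and handle each factor with an explicit summation formula; the bounded-homogeneous analysis via a Vandermonde system then supplies both the uniqueness in (i) and the counterexample in (ii). Let $E$ denote the shift operator on $X$-valued sequences, and set $L := E^p - a_1 E^{p-1} - \ldots - a_p I$. Since $r_1,\ldots,r_p$ are the roots of $P(r):=r^p - a_1 r^{p-1}-\ldots - a_p$, one has the commuting factorization $L = \prod_{k=1}^p (E-r_k)$.

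The key one-step lemma I will first establish is: for any $r\in\mathbb{C}$ with $|r|\neq 1$ and any sequence $b\subset X$ with $\sup_n \|b_n\|\leq\delta$, the equation $(E-r)f=b$ admits a solution with $\sup_n \|f_n\|\leq\delta/\bigl||r|-1\bigr|$. When $|r|>1$ the backward series $f_n := -\sum_{s=0}^{\infty} r^{-s-1} b_{n+s}$ converges absolutely, and a direct substitution verifies $(E-r)f=b$; when $|r|<1$, the forward recursion with $f_0=0$ gives $f_n=\sum_{s=0}^{n-1} r^{n-1-s}b_s$, whose norm is dominated by a geometric series. Given $\|Lx\|_\infty\leq\varepsilon$, I iterate: set $f^{(0)}:=Lx$ and solve $(E-r_k)f^{(k)}=f^{(k-1)}$ for $k=1,\ldots,p$. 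The error $e:=f^{(p)}$ satisfies $Le=Lx$ with $\|e_n\|\leq\varepsilon/\prod_{k=1}^{p}\bigl||r_k|-1\bigr|$, and $y_n:=x_n-e_n$ is an exact solution of (\ref{eq1}) meeting the desired bound.

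Uniqueness in (i) (when $|r_k|>1$ for all $k$) follows from the fact that any two exact solutions $y,y'$ both within $O(\varepsilon)$ of $x$ have bounded difference $z=y-y'$ satisfying $Lz=0$. Writing $z_n=\sum_k c_k r_k^n$ with $c_k\in X$ and inverting the scalar Vandermonde system on $(z_n,\ldots,z_{n+p-1})$ expresses each $c_k r_k^n$ as a fixed linear combination of those entries, hence $\|c_k r_k^n\|$ is bounded in $n$; with $|r_k|>1$, letting $n\to\infty$ forces $c_k=0$, so $y=y'$. For (ii), fix $j$ with $|r_j|=1$, a unit vector $v\in X$, and set $x_n := n r_j^n v/\bigl(r_j P'(r_j)\bigr)$; the identity $L(nr^n)=nr^n P(r)+r^{n+1} P'(r)$ at $r=r_j$ (with $P'(r_j)\neq 0$ by simplicity of $r_j$) gives $Lx_n=r_j^n v$, so $\|Lx_n\|=1$. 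Any hypothetical exact solution $y_n=\sum_k c_k r_k^n$ lying within uniform distance $K$ of $x$ forces, via the same Vandermonde inversion, the sequence $c_j r_j^n$ to absorb a term growing linearly in $n$ (coefficient $1/\bigl(r_j P'(r_j)\bigr)$ in front of $n r_j^n v$), which contradicts $\|c_j r_j^n\|=\|c_j\|$.

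The main obstacle I anticipate is the careful bookkeeping in (ii): tracking the unbounded particular solution through the Vandermonde inversion of a purely homogeneous ansatz demands attention to the leading-order growth and the fact that no homogeneous term of the form $c_k r_k^n$ can mimic $n r_j^n$. The existence bound in (i) and the uniqueness statement fall out smoothly from the factorization together with the one-step geometric estimate.
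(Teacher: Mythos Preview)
The paper does not prove this statement; it is quoted from \cite{brzdek-popa1} and \cite{popa}, and the argument there is exactly the commuting factorization $L=\prod_{k}(E-r_k)$ together with the first-order geometric estimate you describe. So for the existence bound in (i) your plan coincides with the intended source.

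There are, however, two genuine gaps. In (ii) you use $P'(r_j)\neq 0$ ``by simplicity of $r_j$'', but the theorem does not assume the roots are simple; if $|r_j|=1$ with multiplicity $m\ge 2$ your candidate $x_n=n r_j^{\,n} v/(r_j P'(r_j))$ is undefined. The repair is either to take $x_n$ proportional to $n^{m} r_j^{\,n} v$ (so that $L(n^{m}r_j^{\,n})$ yields a nonzero bounded right-hand side via the $m$-th derivative of $P$), or, more in the spirit of the factorization, to prove instability of the single factor $E-r_j$ (with $x_n=n r_j^{\,n-1}v$) and then note that applying the remaining bounded-coefficient factors $\prod_{k\neq j}(E-r_k)$ to a bounded sequence keeps it bounded, reducing the higher-order problem to the first-order one. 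Your uniqueness argument in (i) has the same hidden hypothesis: the Vandermonde inversion needs distinct roots. With repeated roots the homogeneous basis contains terms $n^{\ell}r_k^{\,n}$; the conclusion still holds, but you must replace the Vandermonde step by either a dominant-term growth argument or by iterating the first-order uniqueness (the bounded solution of $(E-r)f=b$ is unique when $|r|>1$) through the $p$ factors.

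Second, when $\mathbb{K}=\mathbb{R}$ and $X$ is a real Banach space, your backward series $-\sum_{s\ge 0} r^{-s-1}b_{n+s}$ multiplies elements of $X$ by complex scalars, which is not defined. The paper handles precisely this issue in its own Theorem~\ref{th1} by passing to the complexification $X^{2}$ with the Taylor norm and projecting back onto the first coordinate; you should insert the same step, or else pair conjugate factors $(E-r)(E-\bar r)=E^{2}-2(\mathrm{Re}\,r)E+|r|^{2}$ into real second-order blocks and supply a real two-step estimate.
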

 Remark that $K=\frac{1}{\left|\prod\limits_{k=1}^{p}(|r_k|-1)\right|}$ is an Ulam constant for the linear difference equation $(\ref{eq1})$.

Consider also the linear and nonhomogeneous equation

\begin{equation}\label{eq1nonh}
x_{n+p}=a_1x_{n+p-1}+\ldots+a_px_n+f_n,\quad n\in\mathbb{N},
\end{equation}
associated to $(\ref{eq1}),$ where $(f_n)_{n\geq 0}$ is a sequence in $X.$
The general solution of $(\ref{eq1nonh})$ is given by
\begin{equation}\label{eq gen}
x_n=x_n^{(H)}+x_n^{(P)},\quad n\in\mathbb{N},
\end{equation}
where $x_n^{(H)}$ is the general solution of the homogeneous equation $(\ref{eq1})$ and $x_n^{(P)}$ is a particular solution of $(\ref{eq1nonh}).$
According to the method of variation of parameters (see \cite{elaydi, gil}), the equation $(\ref{eq1nonh})$  admits a particular solution of the form
 \begin{equation}\label{eq3-solpart}
 x_{n}^{(P)}=\mathcal{C}_1(n) r_1^n+\ldots+\mathcal{C}_p(n)r_p^n, \quad n\in\mathbb{N}.
 \end{equation}
 The coefficients $\mathcal{C}_1(n),\ldots, \mathcal{C}_p(n)$ satisfy the equation
 \begin{equation}\label{eq3-matrice}
 \begin{pmatrix}r_1^{n+1}&r_2^{n+1}&\ldots& r_p^{n+1}\\r_1^{n+2}&r_2^{n+2}&\ldots& r_p^{n+2}\\ \ldots &\ldots& \ldots& \ldots\\
 r_1^{n+p}&r_2^{n+p}&\ldots& r_p^{n+p}\end{pmatrix}
 \begin{pmatrix}
   \Delta\mathcal{C}_{1}(n) \\
    \Delta\mathcal{C}_{2}(n) \\
    \vdots\\
    \Delta\mathcal{C}_{p}(n)
 \end{pmatrix}=
 \begin{pmatrix}
   0 \\
   \vdots \\
   0 \\
   f_n
 \end{pmatrix}, \quad n\in\mathbb{N},
 \end{equation}
 where $\Delta f$ denotes the linear difference of order one, i.e., $\Delta f(x)=f(x+1)-f(x),$ for a function $f:\mathbb{R}\to X.$
Denoting
\begin{eqnarray*}
W(n+1)= \begin{pmatrix}r_1^{n+1}&r_2^{n+1}&\ldots& r_p^{n+1}\\r_1^{n+2}&r_2^{n+2}&\ldots& r_p^{n+2}\\ \ldots &\ldots& \ldots& \ldots\\
 r_1^{n+p}&r_2^{n+p}&\ldots& r_p^{n+p}\end{pmatrix},
 \end{eqnarray*}
\begin{eqnarray*}
  X(n)=\begin{pmatrix}
   \Delta\mathcal{C}_{1}(n) \\
    \Delta\mathcal{C}_{2}(n) \\
    \vdots\\
    \Delta\mathcal{C}_{p}(n)
 \end{pmatrix}\mbox{ and }
 F(n)=\begin{pmatrix}
   0 \\
   \vdots \\
   0 \\
   f_n
 \end{pmatrix},
 \end{eqnarray*}
the equations $(\ref{eq3-matrice}) $ becomes
$$W(n+1)\cdot X(n)=F(n),\quad n\in\mathbb{N}.$$
Consequently,
$$ X(n)=W^{-1}(n+1)\cdot F(n), \quad n\in\mathbb{N}.$$
The inverse matrix is given by $W^{-1}(n+1)=\frac{1}{\det W(n+1)}W^{*}(n+1),$ where
$W^{*}(n+1)=(w_{i,j}),$ $i,j=1,\ldots,p$ denotes the adjoint matrix.
Notice that in order to find the value of the coefficients $\mathcal{C}_1(n),\ldots,\mathcal{C}_p(n)\in X,$ one needs to find only the elements $(w_{i,p}),$ $i=1,\ldots,p$ of the adjoint matrix $W^{*}(n+1)$, i.e., the cofactors $(p,j)_{1\leq j\leq p}$ of the matrix $W(n+1).$

In what follows, we denote for simplicity the Vandermonde determinants of order $p+1$ by $V=V(r_1,r_2,\ldots,r_p)$ and by $V_k=V(r_1,r_2,\ldots,r_{k-1},r_{k+1},\ldots, r_p),$ $k=1,\ldots, p,$ the Vandermonde determinants of order $p,$ respectively. Consequently, we obtain $$det W(n+1)=r_1^{n+1}r_2^{n+1}\cdots r_{p}^{n+1}\cdot V(r_1,r_2,\ldots,r_p)$$ and
$$C_{k}(n)=(-1)^{p+k}\frac{V_k}{V}\sum\limits_{s=1}^{n}\frac{f_{s-1}}{r_k^{s}},\ k=1,\ldots,p.$$

Hence, a particular solution of the equation $(\ref{eq1nonh})$ takes the following
form
\begin{equation}\label{forma sol particulara}
x_n^{(P)}=\frac{1}{V}\sum\limits_{s=1}^{n}\{(-1)^{p+1}V_1r_1^{n-s}+(-1)^{p+2}V_2r_2^{n-s}+\ldots+(-1)^{2p}V_pr_p^{n-s}\}f_{s-1}.
\end{equation}

The main result on the Ulam stability of the equation $(\ref{eq1})$ is given in the next theorem.

\begin{theorem}\label{th1}
Suppose that the characteristic equation admits distinct roots with $|r_k|> 1,$ $k=1,\dots,p.$
Then for every $\varepsilon>0$ and every sequence $(x_n)_{n\geq0}$ in $X$ satisfying
\begin{equation}\label{eq6}
\|x_{n+p}-a_1x_{n+p-1}-\ldots-a_px_n\|\leq\varepsilon,\  n\in\mathbb{N},
\end{equation}
there exists a unique sequence $(y_n)_{n\geq 0}$ in $X$ such that
\begin{equation}\label{eq7}
y_{n+p}=a_1y_{n+p-1}+\ldots+a_py_n,\ n\in\mathbb{N},
\end{equation}
\begin{equation}\label{eq8}
\|x_n-y_n\|\leq K\varepsilon,\ n\in\mathbb{N},
\end{equation}
where
\begin{equation}\label{constK}
K=\frac{1}{|V|}\sum\limits_{s=1}^{\infty}\left|\frac{V_1}{r_1^s}-\frac{V_2}{r_2^s}+\ldots+(-1)^{p+1}\frac{V_p}{r_p^s}\right|.
\end{equation}
\end{theorem}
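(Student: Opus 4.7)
The plan is to recognise any sequence $(x_n)$ satisfying (\ref{eq6}) as an exact solution of the inhomogeneous equation (\ref{eq1nonh}) driven by $f_n:=x_{n+p}-a_1x_{n+p-1}-\ldots-a_px_n$, which by hypothesis satisfies $\|f_n\|\leq\varepsilon$. I would then exploit the variation-of-parameters representation (\ref{forma sol particulara}) together with the crucial fact that $|r_k|>1$ makes every series $\sum_{s=1}^{\infty}f_{s-1}/r_k^{s}$ absolutely convergent in $X$.

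Concretely, write $x_n=x_n^{(H)}+x_n^{(P)}$ with $x_n^{(P)}=\sum_{k=1}^{p}\mathcal{C}_k(n)r_k^n$ and $\mathcal{C}_k(n)=(-1)^{p+k}(V_k/V)\sum_{s=1}^{n}f_{s-1}/r_k^{s}$. Setting
\[
\hat{\mathcal{C}}_k:=(-1)^{p+k}\frac{V_k}{V}\sum_{s=1}^{\infty}\frac{f_{s-1}}{r_k^{s}},\qquad y_n:=x_n^{(H)}+\sum_{k=1}^{p}\hat{\mathcal{C}}_k r_k^n,
\]
produces a bona fide solution of the homogeneous equation (\ref{eq7}), and the difference collapses to the remainder
\[
x_n-y_n=-\sum_{k=1}^{p}(-1)^{p+k}\frac{V_k}{V}\,r_k^n\sum_{s=n+1}^{\infty}\frac{f_{s-1}}{r_k^{s}}.
\]
Reindexing $s=n+t$ and swapping the absolutely convergent sums turns this into
\[
x_n-y_n=-\frac{1}{V}\sum_{t=1}^{\infty}\left(\sum_{k=1}^{p}(-1)^{p+k}\frac{V_k}{r_k^{t}}\right)f_{n+t-1},
\]
so the triangle inequality together with $\|f_{n+t-1}\|\leq\varepsilon$ yields $\|x_n-y_n\|\leq K\varepsilon$; the sign $(-1)^{p+k}$ differs from the $(-1)^{k+1}$ appearing in (\ref{constK}) only by the global factor $(-1)^{p-1}$, which is inert under the absolute value.

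For uniqueness I would argue that if two solutions $y_n,y_n'$ of the homogeneous equation both satisfy (\ref{eq8}), then $z_n:=y_n-y_n'$ is a bounded solution of (\ref{eq7}), and by (\ref{eq2-solgen}) has the form $z_n=\sum_{k=1}^{p}A_k r_k^n$. Inverting the Vandermonde system on any window of $p$ consecutive indices expresses each $A_k r_k^n$ as a fixed linear combination of $z_n,\ldots,z_{n+p-1}$, so $|A_k|\,|r_k|^n$ stays bounded in $n$; since $|r_k|>1$ this forces $A_k=0$ for every $k$.

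The step requiring the most care is the reindexing and interchange that produce the single-series representation of $x_n-y_n$, together with matching its sign pattern to that of (\ref{constK}); everything else is routine once the variation-of-parameters formula established in the excerpt is in hand.
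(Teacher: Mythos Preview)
Your existence argument is essentially identical to the paper's: set $f_n:=x_{n+p}-a_1x_{n+p-1}-\cdots-a_px_n$, use the variation-of-parameters decomposition, absorb the full series $\sum_{s=1}^{\infty}f_{s-1}/r_k^{s}$ into the homogeneous part, and reindex the remainder to obtain the single-series expression that is then estimated term by term. Your uniqueness argument via Vandermonde inversion on a window of $p$ consecutive indices is in fact more explicit than the paper's, which simply asserts that a nontrivial combination $\sum_k\lambda_k r_k^{n}$ with all $|r_k|>1$ must be unbounded.

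There is, however, one case you have not covered. The theorem is stated for a Banach space $X$ over $\mathbb{K}\in\{\mathbb{R},\mathbb{C}\}$, and when $\mathbb{K}=\mathbb{R}$ the characteristic roots $r_k$ may well be non-real; then expressions such as $\mathcal{C}_k r_k^{n}$ with $\mathcal{C}_k\in X$ are undefined, so neither the representation $x_n^{(H)}=\sum_k\mathcal{C}_k r_k^{n}$ nor your Vandermonde inversion can be carried out inside $X$. The paper deals with this by complexifying: it equips $X^{2}$ with the Taylor norm $\|(x,y)\|_{T}=\sup_{0\le\theta\le 2\pi}\|(\cos\theta)x+(\sin\theta)y\|$, applies the already-proved complex case to $X_n=(x_n,0)$, and projects the resulting solution $Y_n$ onto its first coordinate, using $\|x\|\le\|(x,y)\|_{T}$ to transport the estimate back to $X$. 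Once you insert this reduction, your plan is complete.
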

\begin{proof}{\bf{Existence.}} First we consider the case $\mathbb{K}=\mathbb{C}.$
Let $(x_n)_{n\geq0}$ be a sequence in $X$ satisfying $(\ref{eq6})$ and let
$$f_n:=x_{n+p}-a_1x_{n+p-1}-\ldots-a_px_n,\quad n\in\mathbb{N}.$$
Hence $\|f_n\|\leq \varepsilon,$ for every $n\in\mathbb{N}.$

Then, there exist $\mathcal{C}_1,\ldots,\mathcal{C}_p \in X$ such that
$$x_n= \mathcal{C}_1r_1^n+\ldots+\mathcal{C}_pr_p^n+x_n^{(P)},$$
where $x_n^{(P)}$ is given by $(\ref{forma sol particulara}).$
Define $(y_n)_{n\geq 0}$  by $y_0=x_0$ and $$y_n=\overline{\mathcal{C}}_1r_1^n+\ldots+\overline{\mathcal{C}}_pr_p^n, \ n\geq 1,$$
with
\begin{equation}\label{alegere ck}
\overline{\mathcal{C}}_k=\mathcal{C}_k+\frac{(-1)^{p+k}V_k}{V}\sum\limits_{s=1}^{\infty}\frac{f_{s-1}}{r_k^{s}},\ 1\leq k\leq p.
\end{equation}
Since $\|\frac{f_{s-1}}{r_k^{s}}\|\leq \frac{\varepsilon}{|r_k|^{s}},$ $s\geq 1$ and $|r_k|>1,$ $k=1,\ldots,p,$ it follows that the series $\sum\limits_{s=1}^{\infty}\frac{f_{s-1}}{r_k^{s}}$ is absolutely convergent, so the constants $\overline{\mathcal{C}}_k,$ $k=1,\ldots, p,$ are well defined. On the other hand $(y_n)_{n\geq 0}$ satisfies (\ref{eq7}).

We get
\begin{eqnarray*}
x_{n}-y_{n}\!&=&\!\!-\frac{1}{V}\left\{\sum\limits_{s=n+1}^{\infty}\left((-1)^{p+1}\frac{V_1}{r_1^{s-n}}+(-1)^{p+2}\frac{V_2}{r_2^{s-n}}+\ldots+(-1)^{2p}\frac{V_p}{r_p^{s-n}}\right)f_{s-1}\right\}\\
&=&\!\!-\frac{1}{V}\sum\limits_{s=1}^{\infty}\left((-1)^{p+1}\frac{V_1}{r_1^{s}}+(-1)^{p+2}\frac{V_2}{r_2^{s}}+\ldots(-1)^{2p}\frac{V_p}{r_p^{s}}\right)f_{s+n-1},\ n\in\mathbb{N},
\end{eqnarray*}
therefore
\begin{eqnarray*}
\|x_n-y_n\|\leq \frac{\varepsilon}{|V|}\sum\limits_{s=1}^{\infty}\left|\frac{V_1}{r_1^s}-\frac{V_2}{r_2^s}+\ldots+(-1)^{p+1}\frac{V_p}{r_p^s}\right|,\ n\in\mathbb{N}.
\end{eqnarray*}

Now let $\mathbb{K}=\mathbb{R}.$ Then $X^2$ is a complex Banach space with a linear structure and the Taylor norm $\|\cdot\|_{T}$ defined by
\begin{align*}
(x,y)+(z,w)&=(x+z,y+w)\\
(\lambda+i\mu)(x,y)&=(\lambda x-\mu y, \mu x+\lambda y)\\
\|(x,y)\|_{T}&=\sup\limits_{0\leq\theta\leq 2\pi}\|(\cos \theta)x+(\sin \theta) y\|,
\end{align*}
for $x,y,z,w\in X$ and $\lambda, \mu\in\mathbb{R},$ see \cite[p. 66]{kadison} and \cite[p. 39]{fabian}.
The following relations hold
$$\max\{\|x\|,\|y\|\}\leq \|(x,y)\|_{T}\leq \|x\|+\|y\|$$ for all $x,y\in X.$ Define $(X_{n})_{n\geq 0}$ by $(X_{n})=(x_{n},0),$ $n\in\mathbb{N}.$ Then
$$\|X_{n+p}-a_1X_{n+p-1}-\ldots-a_pX_{n}\|_{T}\leq\varepsilon,\ n\in\mathbb{N}.$$
According to the previous part of the proof there exists a sequence $(Y_n)_{n\geq0}$ in $X^2$ such that
$$\|X_n-Y_n\|_{T}\leq K\varepsilon,\ n\in\mathbb{N}.$$

Let $p_i(x_1,x_2)=x_i,$ $i=1,2.$ Then $y_n=p_1(Y_n),$ $n\in\mathbb{N},$ is a solution of $(\ref{eq1})$ and $(\ref{eq8})$ holds.

{\bf Uniqueness.} Suppose that for a sequence $(x_n)_{n\geq0}$ satisfying $(\ref{eq6})$ there exist two sequences $(y_n)_{n\geq0}$ and $(z_n)_{n\geq 0}$ satisfying the equation $(\ref{eq7})$ such that
$$\|x_n-y_n\|\leq K\varepsilon \mbox{ and }\|x_n-z_n\|\leq K\varepsilon,\ n\geq0,$$
where $K$ is given by $(\ref{constK}).$ Then
\begin{eqnarray}\label{marginire}
\|y_n-z_n\|\leq\|y_n-x_n\|+\|x_n-z_n\|\leq2K\varepsilon,\ n\geq0.
\end{eqnarray}
The sequence $(u_n)_{n\geq0},$  $u_n=y_n-z_n,$ $n\geq0,$ satisfies also the relation $(\ref{eq1}),$ therefore there exist $\lambda_1,\lambda_2,\ldots,\lambda_p\in X$ such that
\begin{eqnarray*}
u_n=\lambda_1r_1^n+\lambda_2r_2^n+\ldots+\lambda_pr_p^n.\\
\end{eqnarray*}

Since $|r_k|>1,$ $k=1,\ldots,p,$ it follows that $(u_n)_{n\geq0}$ is unbounded,a contradiction to $(\ref{marginire}).$
Consequently, $\lambda_1=\lambda_2=\dots=\lambda_p=0,$ which entails $y_n=z_n,$ $n\geq0.$
The theorem is proved.
\end{proof}

The result on the \emph{best Ulam constant} of the equation $(\ref{eq1})$ is given in the next theorem.

\begin{theorem}\label{th2}
If $|r_k|>1,$ $1\leq k\leq p,$ then the best Ulam constant of the equation $(\ref{eq1})$ is given by
\begin{equation}\label{bestct}
K_R=\frac{1}{|V|}\sum\limits_{s=1}^{\infty}\left|\frac{V_1}{r_1^s}-\frac{V_2}{r_2^s}+\ldots+\frac{(-1)^{p+1}V_p}{r_p^s}\right|.
\end{equation}
\end{theorem}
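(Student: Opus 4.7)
The plan is to combine the upper bound $K_R \le K$ already supplied by Theorem \ref{th1} with a matching lower bound, obtained by exhibiting, for each $\varepsilon > 0$, an approximate solution of $(\ref{eq1})$ whose unique exact-solution companion lies at distance exactly $K\varepsilon$. The starting point is the identity derived in the proof of Theorem \ref{th1}, evaluated at $n = 0$,
\[
x_0 - y_0 \;=\; -\sum_{s=1}^{\infty}\alpha_s\,f_{s-1},\qquad
\alpha_s := \frac{1}{V}\Bigl((-1)^{p+1}\tfrac{V_1}{r_1^s}+\ldots+(-1)^{2p}\tfrac{V_p}{r_p^s}\Bigr),
\]
so that $\sum_{s=1}^{\infty} |\alpha_s| = K$. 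The goal is to choose the free data $(f_n)$ with $\|f_n\| \le \varepsilon$ so that all the terms $\alpha_s f_{s-1}$ align in a common direction.

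Fix a unit vector $u \in X$. In the case $\mathbb{K} = \mathbb{C}$, I would take $f_{s-1} := \varepsilon(\overline{\alpha_s}/|\alpha_s|)u$ whenever $\alpha_s \ne 0$ (and $f_{s-1} := 0$ otherwise), and then build $(x_n)$ from $(f_n)$ by choosing $x_0 = \ldots = x_{p-1} = 0$ and iterating $x_{n+p} := a_1 x_{n+p-1} + \ldots + a_p x_n + f_n$. This ensures $\|f_n\| \le \varepsilon$, each $\alpha_s f_{s-1} = \varepsilon |\alpha_s| u$, and consequently $\|x_0 - y_0\| = \varepsilon \sum_s |\alpha_s| = K\varepsilon$. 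For the case $\mathbb{K} = \mathbb{R}$ the same recipe works with $f_{s-1} := \varepsilon\,\mathrm{sgn}(\alpha_s)\,u$, provided one first checks that $\alpha_s \in \mathbb{R}$. To this end I would use the cofactor identity $V_k/V = (-1)^{p-k}/P'(r_k)$, where $P(r) = r^p - a_1 r^{p-1} - \ldots - a_p$ is the characteristic polynomial, which recasts the scalar as $\alpha_s = \sum_{k=1}^p 1/(P'(r_k)\,r_k^s)$; since $P$ has real coefficients, its roots form a conjugation-closed multiset, so complex conjugation merely permutes the summands, giving $\overline{\alpha_s} = \alpha_s$.

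To close the argument, I would suppose for contradiction that some $K' < K$ is also an Ulam constant of $(\ref{eq1})$. For the sequence $(x_n)$ constructed above there would then exist $(y_n')$ satisfying $(\ref{eq7})$ with $\|x_n - y_n'\| \le K'\varepsilon$; combined with $\|x_n - y_n\| \le K\varepsilon$, the difference $(y_n - y_n')$ is a bounded solution of $(\ref{eq1})$ of the form $\sum_k \lambda_k r_k^n$, and the $|r_k| > 1$ argument already exploited in the uniqueness part of Theorem \ref{th1} forces $\lambda_1 = \ldots = \lambda_p = 0$. Hence $y_n' = y_n$ and $K'\varepsilon \ge \|x_0 - y_0\| = K\varepsilon$, contradicting $K' < K$; thus $K_R \ge K$, and together with $K_R \le K$ this yields $K_R = K$. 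The main technical obstacle I foresee is the real-case verification that $\alpha_s \in \mathbb{R}$: while the partial-fraction reformulation makes the statement essentially transparent, the cofactor computation $V_k/V = (-1)^{p-k}/P'(r_k)$ has to be executed carefully, since the sign depends sensitively on how the pairs $(i,j)$ with $i=k$ or $j=k$ are collected inside the Vandermonde product.
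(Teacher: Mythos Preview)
Your proof is correct and follows essentially the same route as the paper's: choose $(f_n)$ with $\|f_n\|\le\varepsilon$ so that all the terms $\alpha_s f_{s-1}$ align, build the approximate solution, use the $|r_k|>1$ hypothesis to force the companion exact solution to be unique, and read off the contradiction from one fixed index (you use $n=0$, the paper uses $n=1$ after choosing the bounded representative of $(x_n)$ directly). Your explicit verification that $\alpha_s\in\mathbb{R}$ when $\mathbb{K}=\mathbb{R}$, via the cofactor identity $V_k/V=(-1)^{p-k}/P'(r_k)$ and the conjugation symmetry of the roots, is in fact more careful than the paper, which simply sets $f_n=\tfrac{|E_n|}{E_n}u\varepsilon$ without checking that this scalar lies in $\mathbb{K}$.
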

\begin{proof}
Suppose that the equation $(\ref{eq1})$ admits an Ulam constant $K<K_R.$ Let $\varepsilon>0,$ $u\in X,$ $\|u\|=1 $ and \begin{eqnarray}\label{def-fn}
f_n=
\left\{\begin{array}{ll}
\frac{|E_n|}{E_n}u\varepsilon,& \mbox{ if } E_n\neq 0,\\
0,& \mbox{ if } E_n=0,
\end{array} \right.
\end{eqnarray}
where $$E_n=\frac{V_1}{r_1^n}-\frac{V_2}{r_2^n}+\ldots+\frac{(-1)^{p+1}V_p}{r_p^n},\ n\geq 1.$$

Let $(x_n)$ be the solution of the equation
$$x_{n+p}-a_1x_{n+p-1}-\ldots -a_p x_n=f_n,\ n\geq 0,$$
given by
$$x_n=\mathcal{C}_1r_1^{n}+\ldots+\mathcal{C}_pr_p^{n}+x_n^{(P)},$$ where $x_n^{(P)}$ is given by $(\ref{eq3-solpart})$ with
$$C_k=-(-1)^{p+k}\frac{V_k}{V}\sum\limits_{s=1}^{\infty}\frac{f_{s-1}}{r_k^s},\quad k=1,2,\ldots, p.$$
Then
\begin{eqnarray}
x_n&=&-\frac{1}{V}\sum\limits_{s=n+1}^{\infty}\left(\frac{(-1)^{p+1}V_1}{r_1^{s-n}}+\frac{(-1)^{p+2}V_2}{r_2^{s-n}}+\cdots +\frac{(-1)^{2p}V_p}{r_p^{s-n}}\right)f_{s-1}\nonumber \\
&=& \frac{(-1)^p}{V}\sum\limits_{s=1}^{\infty}\left(\frac{V_1}{r_1^{s}}-\frac{V_2}{r_2^{s}}+\cdots +\frac{(-1)^{p+1}V_p}{r_p^{s}}\right)f_{n+s-1}.\label{eq11}
\end{eqnarray}

Since $\|f_n\|\leq \varepsilon,$ $n\geq 0,$ and $|r_k|>1,$ $1\leq k\leq p,$ it follows that $(x_n)_{n\geq 0}$ is a bounded sequence in $X$ and
$$\|x_{n+p}-a_1 x_{n+p-1}-\cdots-a_{p}x_n\|\leq \varepsilon,\ n\geq 0.$$

Then there exist a sequence $(y_n)_{n\geq 0}$ satisfying $(\ref{eq3}),$  $y_n=\mathcal{K}_1 r_1^n+\ldots+ \mathcal{K}_p r_p^{n},$ $n\geq 0,$ $\mathcal{K}_1,\ldots,\mathcal{K}_p\in X,$ such that
\begin{equation}\label{eq 12}
\|x_n-y_n\|\leq K\varepsilon,\ n\geq 0.
\end{equation}

If $(\mathcal{K}_1, \mathcal{K}_2,\ldots, \mathcal{K}_n)\neq (0,0,\ldots,0)$ letting $n\to\infty$ in $(\ref{eq 12})$ we get $\infty\leq K\varepsilon,$ a contradiction. Therefore
$(\mathcal{K}_1, \mathcal{K}_2,\ldots, \mathcal{K}_n)=(0,0,\ldots,0),$ $y_n=0,$ for all $n\geq 0.$ For $n=1$ in $(\ref{eq 12})$ it follows that $\|x_1\|\leq K\varepsilon.$ But according to $(\ref{eq11}),$ we get
$$x_1=\frac{(-1)^p}{V}\sum\limits_{s=1}^{\infty}E_sf_s=\frac{(-1)^p}{V}u\varepsilon\sum\limits_{s=1}^{\infty}|E_s|.$$
Therefore $\|x_1\|=\varepsilon K_R.$ Thus the relation $\|x_1\|\leq K\varepsilon$ leads to $K_R\leq K,$ a contradiction to the initial supposition.
\end{proof}

\begin{remark}
 Theorem \ref{th2} is an extension of the result given in \cite{baias-popa, baias-popa3} for distinct roots of the characteristic equation.
Indeed, the particular case $p=2$ corresponds to the second order linear difference equation, i.e.,
\begin{equation}\label{eq5}
 x_{n+2}=a_1 x_{n+1}+a_2x_n,\ n\in\mathbb{N},
 \end{equation}
 and the best Ulam constant in this case is
$$K_R=\frac{1}{|r_1-r_2|}\sum\limits_{s=1}^{\infty}\left|\frac{1}{r_1^s}-\frac{1}{r_2^s}\right|,$$ for $|r_1|>1,$ $|r_2|>1$ (see \cite{baias-popa2}).

The particular case $p=3$ corresponds to the third order linear difference equation, i.e.,
\begin{equation}\label{eqp3}
x_{n+3}=a_1x_{n+2}+a_2x_{n+1}+a_3x_n,\ n\in\mathbb{N},
\end{equation}
 and the best Ulam constant in this case is
$$K_R=\frac{1}{|(r_3-r_1)(r_3-r_2)(r_2-r_1)|}\sum\limits_{s=1}^{\infty}\left|\frac{r_3-r_2}{r_1^{s}}+\frac{r_1-r_3}{r_2^{s}}+\frac{r_2-r_1}{r_3^{s}} \right|, $$ for $|r_1|>1,$ $|r_2|>1, |r_3|>1$ (see \cite{baias-popa3}).
\end{remark}

\begin{remark}
It will be interesting to obtain a closed form for the best Ulam constant of the equation $(\ref{eq1})$ for all the cases when the roots of the characteristic equation are situated outside of the unit circle.
\end{remark}





\bibliographystyle{amsplain}

\end{document}